 \newtheorem{thm}{Theorem}[section]
 \newtheorem{cor}[thm]{Corollary}
 \newtheorem{lem}[thm]{Lemma}
 \theoremstyle{definition}
 \theoremstyle{remark}
 \newtheorem*{ex}{Example}
 \numberwithin{equation}{section}
\newcommand{\Sym}{\ensuremath \mathrm{Sym}}
\newcommand{\ord}{\ensuremath \mathop \mathrm{ord} \nolimits}
\begin{document}
%
%
%
%
%
%
%
%
%
\title[Refinement of Two-Factor Factorizations of LPDOs]
 {Refinement of Two-Factor Factorizations of
  \\a Linear Partial Differential
 Operator\\
 of Arbitrary Order and Dimension
 }
\author[Ekaterina Shemyakova]{Ekaterina Shemyakova}

\address{%
Research Institute for Symbolic Computation (RISC) \\
J.Kepler University \\
Altenbergerstr. 69 \\
A-4040 Linz \\
Austria}

\email{Ekaterina.Shemyakova@risc.jku.at}

\subjclass{Primary 47F05; Secondary 68W30}

\keywords{factorization, LPDOs, PDEs, Linear Partial Differential
Operators}

\date{July 14, 2010}

\begin{abstract} Given a right factor and a left factor of a Linear Partial Differential
Operator (LPDO), under which conditions we can refine these two-factor
factorizations into one three-factor factorization?
This problem is solved for LPDOs of arbitrary order and number of
variables. A more general result for the incomplete factorizations of
LPDOs is proved as well.
\end{abstract}

\maketitle

\section{Introduction}

The factorization of Linear Partial Differential Operators (LPDOs)
is an essential part of recent algorithms for the exact solution for
Linear Partial Differential Equations (LPDEs). Examples of such
algorithms include numerous generalizations and modifications of the
18th-century Laplace Transformations
Method~\cite{2ndorderparab,ts:genLaplace05,ts:steklov_etc:00,anderson_juras97,anderson_kamran97,athorne1995,Zh-St,St08},
the Loewy decomposition method~\cite{gs,gs:genloewy05,gs:08}, and
others.

The problem of constructing a general factorization algorithm for an
LPDO is still an open problem, although several important
contributions have been made in recent decades, and different
approaches have been applied (see~\cite{gs,LiSchTs:03,LiSchTs02,ts:enumerLODO:96,ts:genLaplace05,obstacle2,
Cluzeau:fact:char:p:2003,Cluzeau:Quadrat:fact:2008,Cluzeau:Quadrat:2006} and many
others).
Many of the recent approaches are concerned, in particular, with
explaining the non-uniqueness of factorization:
(irreducible) factors and the number of factors are
not necessarily the same for two different factorizations of the
same operator. This is commonly illustrated by
the famous example of Landau~\cite{blumberg},
\begin{ex}[Landau]
\begin{eqnarray*}
L &=& \left(D_x + 1 + \frac{1}{x+c(y)} \right) \circ  \left(D_x + 1 -
  \frac{1}{x+c(y)} \right) \circ  \left(D_x + x D_y \right) = \\
  &=&  \left(D_{xx} +x D_{xy} +D_x +(2+x)D_y \right)\circ  \left(D_x + 1 \right) \ ,
\end{eqnarray*}
where the second-order factor in the second factorization is
hyperbolic and is irreducible.
\end{ex}

On the other hand, for some classes of LPDOs factorization is unique. For
example, there is no more than one factorization that
extends a factorization of the principal symbol of the operator into
co-prime factors~\cite{gs}.
Algebraic theories have been introduced to explain this phenomenon theoretically; see
Tsarev~\cite{ts:steklov_etc:00}, Grigoriev and Schwarz~\cite{gs:08} and most recently
Cassidy and Singer~\cite{Cas:Singer2011}.

Some important methods of exact integration, for example, the Loewy decomposition methods mentioned
above, require LPDOs to have a number of
different factorizations of certain types. Also completely reducible
LPDOs introduced in~\cite{gs}, which become significant as the
solution space of a completely reducible LPDO coincides with the sum
of those of its irreducible right factors may require a number of
right factors.

In earlier work~\cite{parameters} we have exhaustively studied
families of factorizations for operators up to order $4$, and described
when the same operator has multiple factorizations of
\emph{the same factorization type},
to be more specific, when there exist an infinite number of factorizations of
the same factorization type, meaning having the same symbols of the factors.
The first non-trivial example of such families
of order $4$ has been found:
\begin{ex}~\cite{parameters}
The following is a fourth-order irreducible family of factorizations:
\[
D_{xxyy}=\Big(D_x + \frac{\alpha}{y + \alpha x + \beta} \Big) \Big(
D_y + \frac{1}{y + \alpha x + \beta} \Big) \Big(D_{xy} - \frac{1}{y
+ \alpha x + \beta} (D_x + \alpha D_y) \Big),
\]
where $\alpha, \beta \neq  0 $. Note that the first
two factors commute. So the operator $D_{xxyy}$ has a family of factorizations,
and every factorization of the family is of the same
factorization type $(X)(Y)(XY)$, that is the highest order terms in
the first, the second and the third factors are $D_x$, $D_y$ and $D_{xy}$ correspondingly.
\end{ex}

In recent work~\cite{multiple_fact} non-uniqueness of a different kind is addressed.
There, we considered factorizations of \emph{different factorization types}, and
by using invariants proved that a third-order bivariate operator $L$
has a first-order left factor of the symbol $S_1$ and a first-order
right factor of the symbol $S_2$, where $\gcd(S_1,S_2)=1$ if and
only if it has a complete factorization of the type $(S_1)(T)(S_2)$,
where $T=\Sym(L)/(S_1 S_2)$. Further investigations in the same
paper show that a third-order bivariate operator $L$ has a
first-order left factor $F_1$ and a first-order right factor $F_2$
with $\gcd(\Sym(F_1),\Sym(F_2))=1$ if and only if $L$ has a
factorization into three factors, the left one of which is exactly
$F_1$ and the right one is exactly $F_2$.

\begin{ex}~\cite{multiple_fact}
The existence of two factorizations for an LPDO,
\[
(D_x + x) \circ (D_{xy}+ yD_x +y^2 D_y + y^3)=A=(D_{xx}+ (x+y^2)D_x
+ xy^2) \circ (D_y + y)
\]
implies the existence of the ``complete'' factorization of $A$,
\[
A=(D_x + x) \circ (D_x +y^2) \circ (D_y +y) \ .
\]
\end{ex}

On the other hand,
if the condition $\gcd(\Sym(F_1),\Sym(F_2))=1$ fails, then it can
happen that the ``complete'' factorization does not exist.

\begin{ex}~\cite{multiple_fact}
\[
(D_x D_y + 1 ) \circ (D_x + 1) = (D_x + 1) \circ (D_x D_y + 1 ) \ ,
\]
while $D_x D_y +1$ has no factorization at all.
\end{ex}

In the present paper we have generalized the result of~\cite{multiple_fact}
to the case of LPDOs of arbitrary order and of arbitrary
dimension. Moreover, a more general statement has been formulated and proved
for incomplete factorizations of LPDOs. We describe the results in terms of common obstacles,
which we have introduced in~\cite{obstacle2}.

\section{Preliminaries}

Consider a field $K$ of characteristic zero with commuting
derivations $\partial_1, \dots , \partial_n$, and the corresponding
non-commutative ring of linear partial differential operators
(LPDOs) $K[D]=K[D_1, \dots, D_n]$, where $D_i$ corresponds to the
derivation $\partial_i$ for all $i \in \{1, \dots , n\}$. In $K[D]$
the variables $D_1, \dots, D_n$ commute with each other, but not
with elements of $K$. We write multiplication in $K[D]$ as
``$\circ$''; i.e. $L_1\circ L_2$ for $L_1,L_2 \in K[D]$. Any
operator $L \neq 0 \in K[D]$ has the form
\begin{equation} \label{op:general_form_inKD}
L = \sum_{|J| =0}^d a_{J} D^J \ , a_J \in K \ ,
\end{equation}
where $J=(j_1, \dots , j_n)$ is a multi-index in $\mathbb{N}^n$,
$|J|=j_1+ \dots + j_n$, and where $D^J=D_1^{j_1} \dots D_n^{j_n}$.
Further, there exists some $J$, with $|J|=d$, such that
$a_J \neq 0$. Then $d$ is the order of $L$. For the case $L=0$ we
define the order as $- \infty$.

When considering the bivariate case  $n=2$, we use the following
formal notations: $\partial_1=\partial_x$, $\partial_2=\partial_y$,
$\partial_1(f)=f_x$, $\partial_2(f)=f_y$, where $f\in K$, and
correspondingly $D_1\equiv D_x$, and $D_2 \equiv D_y$ for ease of
notation.

For an operator $L \neq 0$ of the form~(\ref{op:general_form_inKD}) the
homogeneous commutative polynomial
\begin{equation} \label{sym:general_form_inKD}
\Sym(L) =  \sum_{|J| = d} a_J X^J
\end{equation}
in formal variables $X_1, \dots , X_n$ is called the
(\emph{principal}) \emph{symbol}, and if $L=0$, the symbol is
defined to be zero. Vice versa, given a homogeneous
commutative polynomial $S \in K[X]$ in the
form~(\ref{sym:general_form_inKD}), we define the operator
$\widehat{S }\in K[D]$ as the result of substituting $D_i$ for each
variable $X_i$.

\section{Main Result}
\label{sec:main}

Since for two LPDOs $L_1, L_2 \in K[D]$ we have $\Sym(L_1 \circ L_2)
= \Sym(L_1) \cdot \Sym(L_2)$, any factorization of an LPDO extends
some factorization of its symbol. In general, if $L \in K[D]$ and
$\Sym(L)=S_1 \dots S_k$, let us say that the
factorization
\[ L=F_1 \circ \dots  \circ F_k\ , \quad \Sym(F_i)=S_i,\  \forall i
\in \{1, \dots, k\}\ ,\]
is of the factorization type $(S_1)\dots(S_k)$.

For the second-order hyperbolic LPDOs, which have normalized form
\begin{equation} \label{op:LL}
L = D_x D_y + a D_x + b D_y + c \ ,
\end{equation}
where $a,b,c \in K$, it is common to consider their incomplete
factorizations:
\[ L= (D_x + b) \circ (D_y + a) + h = (D_y + a) \circ (D_x + b) + k\ ,\]
where $h= c-a_x - ab$ and $k= c-b_y - ab$
are invariants of (\ref{op:LL}) with respect to
gauge transformations, $L \rightarrow g^{-1} L g$, $g \neq 0, g \in
K$ and are called the Laplace invariants. This is an element in the
foundation of the classical Laplace-Darboux-Transformations
Method~\cite{Darboux2}.

In~\cite{obstacle1,obstacle2} a generalization of this idea is
suggested. Thus, for $A \in K[D]$ with $\Sym(A)=S_1 \dots S_k$, we
call~\cite{obstacle1,obstacle2} an LPDO $R \in K[D]$ a \emph{common
obstacle} to factorization of the type $(S_1)(S_2) \dots (S_k) $ if
there exists a factorization of this type for the operator $A-R$, and
$R$ has minimal possible order.

The following example demonstrates different possibilities
for common obstacles and incomplete factorizations.

\begin{ex} Consider the LPDO
\begin{equation} \label{op:A4}
  A_4 = D_x^2D_y^2 + D_x + D_y + 1 \ .
\end{equation}

\emph{1. Unique common obstacle and unique incomplete factorization.
} Consider factorizations of $A_4$ of the factorization type $(X^2)(Y^2)$.
Assume that the order of common obstacles is one or less (if we come
to a contradiction, we have to search then for higher-order common
obstacles), and search for common obstacles in the form $R_1=p_1 D_x
+ q_1 D_y + r_1$, where $p_1, q_1, r_1 \in K$. Thus, for some
$l_{10}, l_{01}, l_{00}, f_{10}, f_{01}, f_{00} \in K$ we have
\[ A_4 = (D_x^2 + l_{10}D_x + l_{01} D_y + l_{00}) \circ (D_y^2 +f_{10}
D_x + f_{01} D_y + f_{00}) + R_1\ .\]
Comparing the corresponding coefficients we have
$l_{10} = l_{01} = l_{00} = f_{10} = f_{01} = f_{00} = 0$,
$p_1=q_1=r_1=1$, that is, there is a unique common obstacle and a unique incomplete factorization
of the factorization type $(X^2)(Y^2)$,
\[
A_4 =D_x^2 \circ D_y^2 + D_x+D_y+1 \ .
\]

\emph{2. Infinitely many common obstacles and incomplete factorizations.
} Consider factorizations of $A_4$ of the factorization type $(X)(XY^2)$. Again
assume that the order of common obstacles is one or less, and search
for common obstacles in the form $R_2=p_2 D_x + q_2 D_y + r_2$,
where $p_2, q_2, r_2 \in K$. Thus, for some $m_{00}, g_{ij} \in K$
we have $A_4 = (D_x+ m_{00}) \circ (D_xD_y^2 + \sum_{i+j=0}^2 g_{ij} D_x^i
D_y^j) + R_2$. Comparing the corresponding coefficients we have
$g_{02}=-m_{00}$, $g_{20} = g_{11} = g_{10} = g_{01} = 0$,
$q_2=1$, $p_2 = 1 -g_{00}$, $r_2 = 1 - m_{00} g_{00} - g_{00x}$,
while $m_{00}$ satisfies $m_{00}^2 + m_{00x} = 0$,
and $g_{00}$ is a free parameter. Thus, we have
\[
A_4 = (D_x+ m_{00}) \circ (D_xD_y^2 -m_{00} D_y^2 + g_{00}) + (1 -g_{00}) D_x + D_y
+ 1 - m_{00} g_{00} - g_{00x} \ ,
\]
and the order of common obstacles is $1$.

\emph{3. Unique common obstacle and infinitely many incomplete factorizations.
}
Consider factorizations of $A_4$ of the factorization type $(X)(X)(Y^2)$.
We search for common obstacles in the form $R_3=p_3 D_x + q_3 D_y +
r_3$, where $p_3, q_3, r_3 \in K$. Thus, for some $m_3, n_3, a_3,
b_3, c_3 \in K$ we have
\[ A_4 = (D_x + m_3) \circ (D_x + n_3) \circ (D_y^2 + a_3 D_x + b_3 D_y
+ c_3) + R_3\ .\]
Equating the corresponding coefficients we have $n_3 = - m_3$, $a_3 = b_3 = c_3 = 0$,
$p_3=q_3=r_3 = 1$, and $m_3$ satisfies
$m_3^2 + m_{3x} = 0$,
that is we have a unique common obstacle, but
incomplete factorizations can be different:
\[
A_4 = (D_x + m_3) \circ (D_x - m_3) \circ D_y^2 + D_x + D_y + 1 \ .
\]
\end{ex}

The following lemma is used for the proof of Theorem~\ref{thm:main_obstacle}.

\begin{lem}[Division lemma] \label{lem:division}
Let $L, M \in K[D]$ and $\Sym (L)$ is divisible by $\Sym (M)$,
then there exist $N, R \in K[D]$ such that
\[
L = M \circ N + R \ ,
\]
where either $R=0$,  or $\Sym (R)$ is not divisible by $\Sym (M)$.
Here $R$ is the remainder of the incomplete factorization.
\end{lem}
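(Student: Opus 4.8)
The plan is to mimic the classical right-division (Euclidean) algorithm for LPDOs, but organized by the principal symbol rather than by a single leading derivative. Given $L$ and $M$ with $\Sym(M)\mid\Sym(L)$, write $d=\ord(L)$, $m=\ord(M)$, and set $\Sym(L)=\Sym(M)\cdot T$ for a homogeneous polynomial $T\in K[X]$ of degree $d-m$. The first step is to produce a candidate leading term for $N$: take $N_0=\widehat{T}$ (or any LPDO whose symbol is $T$; the choice is immaterial up to lower-order terms), so that $\Sym(M\circ N_0)=\Sym(M)\cdot T=\Sym(L)$, and hence $L_1:=L-M\circ N_0$ has order at most $d-1$. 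This is the key reduction: one step of the procedure strictly lowers the order of the ``current remainder'' while the quotient accumulates the term $N_0$.

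Next I would iterate. At each stage we have a remainder $L_i$ of some order $d-i$. If $\Sym(M)\nmid\Sym(L_i)$, or if $L_i=0$, we stop and output $R=L_i$ together with $N=N_0+N_1+\dots+N_{i-1}$. Otherwise $\Sym(M)\mid\Sym(L_i)$, and since $\deg\Sym(L_i)=d-i$ we may again write $\Sym(L_i)=\Sym(M)\cdot T_i$ with $\deg T_i=d-i-m$, pick $N_i$ with $\Sym(N_i)=T_i$, and replace $L_i$ by $L_{i+1}=L_i-M\circ N_i$, which has order at most $d-i-1$. Because the order of the remainder strictly decreases at every step and is bounded below (once it drops below $m=\ord(M)$ the symbol of $M$ can no longer divide a nonzero symbol of smaller degree, so the process must terminate), the iteration halts after finitely many steps. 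At termination either $R=0$ or $\Sym(R)$ is not divisible by $\Sym(M)$, which is exactly the asserted conclusion; and by construction $L=M\circ N+R$ with $N=\sum_i N_i$.

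The one point that needs a little care — and what I would flag as the main (minor) obstacle — is verifying that each subtraction genuinely lowers the order, i.e. that the top-degree part of $M\circ N_i$ really cancels the top-degree part of $L_i$. This is immediate from the multiplicativity of the symbol, $\Sym(M\circ N_i)=\Sym(M)\cdot\Sym(N_i)=\Sym(M)\cdot T_i=\Sym(L_i)$, which is recalled in the excerpt just before the definition of factorization type; so $L_i-M\circ N_i$ has strictly smaller order. The only subtlety is that $\Sym(M\circ N_i)$ is the product of the \emph{principal} symbols, so one must make sure no spurious top-order contribution survives — but since $\deg(\Sym(M)\cdot\Sym(N_i)) = m+(d-i-m)=d-i$ equals $\ord(L_i)$, there is nothing of higher degree to worry about. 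Finally I would note that neither $N$ nor $R$ is claimed to be unique (indeed the choice of $N_i$ with a prescribed symbol is not unique, as the examples with free parameters above already illustrate); the lemma only asserts existence, so no uniqueness argument is required.
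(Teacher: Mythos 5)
Your proposal is correct and follows essentially the same route as the paper's own proof: both construct the quotient iteratively by setting $N_0=\widehat{T}$ with $T=\Sym(L)/\Sym(M)$, subtracting $M\circ N_i$ at each stage, and terminating because the order of the remainder strictly decreases. Your explicit remark that the process must halt once the remainder's order drops below $\ord(M)$ is a small but welcome addition to the paper's brief appeal to decreasing orders.
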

\begin{proof} Let $\Sym (L) = S_1 S_2$, $\Sym (M) = S_1$.
Construct a finite sequence of $n$ (for some $n$) incomplete factorizations of $L$ of
the form $L = M \circ N_i +Q_i$, where $\Sym (N_i) = S_2, 1 \le i \le n$, and
$\Sym (Q_n)$ is either zero or
not divisible by $\Sym(M)$. Start with $N_1 = \widehat{S_2}$ and
let $Q_1 = L-M \circ N_1$. If $\Sym(Q_1)$ is either zero or not divisible by $\Sym(M)$,
we stop and let $N = N_1$ and $R = Q_1$. Otherwise,
let $T_1 = \Sym(Q_1)/ \Sym(M)$ and let $N_2 = N_1 + \widehat{T_1}$, and let
$Q_2 = Q_1 -M \circ \widehat{T_1}$ (which implies $Q_2 = L -M \circ N_2$).
If $\Sym (Q_2)$ is either zero or not divisible by $\Sym (M)$, we stop and let $N = N_2$, $R = Q_2$.
Otherwise, we continue in the same manner. Since we clearly have $\ord (Q_2) <
\ord (Q_1) < \ord (L)$, and in general, $\ord (Q_{i+1}) < \ord (Q_i)$, this process must
stop after a finite, say $n$, number of steps, and we have
$L = M \circ N + R$,
where $\Sym(N) = S_2$ and $\Sym (R)$ is either zero or is not divisible by $\Sym (M)$.
\end{proof}

Let $A \in K[D]$ and $\Sym(A)=S_1 \cdot S_2 \cdot S_3$.
It is easy to see that every common obstacle to factorization of the type
$(S_1)(S_2)(S_3)$ is the remainder for some incomplete factorization of the type
$(S_1 S_2)(S_3)$ and so it is for some incomplete factorization of the type $(S_1)(S_2S_3)$
(the order of the common obstacles for a factorization of the type
$(S_1 S_2)(S_3)$ (resp. $(S_1)(S_2S_3)$) can be smaller that of
the common obstacles for a factorization of the type $(S_1)(S_2)(S_3)$.
In general, the inverse statement is not true for it is more
difficult to find a factorization into more factors.

The following theorem states that under some conditions, common
obstacles to factorization into two factors are the same as those
into three factors.

\begin{thm}
\label{thm:main_obstacle}
Let $A \in K[D]$ and suppose $\Sym(A) = S_1 S_2 S_3$, with $\gcd(S_1,S_3)=1$.
Let $U$, $V$ and $W$ be respectively the sets of common obstacles to factorizations of $A$
of the types $(S_1)(S_2S_3)$, $(S_1 S_2)(S_3)$, and $(S_1)(S_2)(S_3)$.
Suppose $V$
(resp. $U$) is non-empty and the order of common obstacles in $V$ is less
than $\ord (S_3)$. Then $W$ is non-empty and $V = W$.
\end{thm}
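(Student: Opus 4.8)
\emph{Reduction.} The first step is to reduce the equality $V = W$ to a single inclusion. By the remark preceding the theorem, once $W$ is non-empty every $R \in W$ is a remainder of some incomplete factorization of $A$ of type $(S_1S_2)(S_3)$, and since $\ord(V)$ is by definition the least order attained by remainders of that type, this gives $\ord(V) \le \ord(W)$. Hence it suffices to prove that every $R \in V$ lies in $W$: this already makes $W$ non-empty, and then $\ord(W) \le \ord(R) = \ord(V) \le \ord(W)$ for any $R \in V$, so $\ord(V)=\ord(W)$; consequently each $R' \in W$ has $\ord(R')=\ord(V)$ and, being a remainder of a $(S_1S_2)(S_3)$-factorization of minimal order, lies in $V$. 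Thus $V = W$ will follow.

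\emph{Extracting the left factor $S_1$.} Fix $R \in V$, so that $A - R = G \circ H$ with $\Sym(G)=S_1S_2$, $\Sym(H)=S_3$ and $\ord(R)=\ord(V)<\ord(S_3)$. As $\Sym(G)$ is divisible by $S_1$, Lemma~\ref{lem:division} applied to $G$ gives $G = F_1 \circ P + Q$ with $\Sym(F_1)=S_1$, $\Sym(P)=S_2$, and either $Q=0$ or $\Sym(Q)$ not divisible by $S_1$. If $Q=0$ then $A-R = F_1\circ P\circ H$ is a factorization of type $(S_1)(S_2)(S_3)$, so $R$ is a remainder of that type; by the Reduction its order equals $\ord(V)=\ord(W)$, whence $R \in W$. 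Thus everything comes down to showing that $Q=0$.

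\emph{The crux: $Q = 0$.} Suppose $Q \ne 0$. Then $A = F_1 \circ (P \circ H) + (R + Q \circ H)$, and since $\ord(Q\circ H)=\ord(Q)+\ord(S_3)\ge\ord(S_3)>\ord(R)$, the remainder $R+Q\circ H$ has symbol $\Sym(Q)\,S_3$; because $\gcd(S_1,S_3)=1$ and $S_1$ does not divide $\Sym(Q)$, it does not divide $\Sym(R+Q\circ H)$ either, so $R+Q\circ H$ is a reduced remainder of an incomplete factorization of $A$ of type $(S_1)(S_2S_3)$, of order at least $\ord(S_3)$. Starting from this, I would split $S_3$ off the right of $S_2S_3$ by a second application of Lemma~\ref{lem:division}, using $\gcd(S_1,S_3)=1$ once more so that the $S_3$-divisible part of the remainder — essentially $Q\circ H$ — is carried entirely by the right-hand factor; after the bookkeeping, this should show that when $Q \ne 0$ every incomplete factorization of $A$ of type $(S_1S_2)(S_3)$ is forced to have remainder of order $\ge \ord(S_3)$, so $\ord(V)\ge\ord(S_3)$, contradicting the hypothesis. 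Hence $Q=0$, and the theorem follows.

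\emph{Remarks.} I expect the third paragraph to be the main obstacle: the delicate point is to keep the left-division that extracts $S_1$ mutually compatible with the right-division that extracts $S_3$, and it is precisely here that both $\gcd(S_1,S_3)=1$ — which keeps the relevant remainders coprime to $S_1$, respectively to $S_3$ — and the bound $\ord(V)<\ord(S_3)$ are indispensable; dropping either one destroys the conclusion, as the identity $(D_xD_y+1)\circ(D_x+1)=(D_x+1)\circ(D_xD_y+1)$ with $D_xD_y+1$ irreducible already illustrates. Finally, the parenthetical ``resp.\ $U$'' statement, in which $U$ and $S_1$ replace $V$ and $S_3$, is obtained by running the same argument on the formal adjoint $A^{*}$, under which left and right factors, and the outer symbols $S_1$ and $S_3$, are interchanged.
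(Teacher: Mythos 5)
Your opening reduction of $V=W$ to the single inclusion $V\subseteq W$ is sound and matches the closing step of the paper's argument, but the crux of your proof breaks down in the second and third paragraphs. When you invoke Lemma~\ref{lem:division} to write $G=F_1\circ P+Q$, the lemma requires the divisor to be specified in advance, and you leave $F_1$ essentially arbitrary (in effect $\widehat{S_1}$). For such a choice the claim ``$Q=0$'' is simply false, and so is the implication ``$Q\neq 0\Rightarrow\ord(V)\geq\ord(S_3)$'' that your third paragraph aims at. Take $A=(D_x+1)\circ D_y\circ D_z$ with $S_1=X$, $S_2=Y$, $S_3=Z$: every hypothesis of the theorem holds and $U=V=W=\{0\}$, yet dividing $G=(D_x+1)\circ D_y=D_x\circ D_y+D_y$ by $F_1=D_x$ leaves the nonzero reduced remainder $Q=D_y$. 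So no amount of ``bookkeeping'' can establish the contradiction you are after: the statement you are trying to prove there is not true for an arbitrary left divisor with symbol $S_1$. (And as written that paragraph is in any case only a plan --- ``I would split\dots this should show\dots'' --- not an argument.)

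The missing idea is that the correct divisor must be imported from the \emph{other} incomplete factorization, i.e.\ from the hypothesis on $U$, which you set aside by reading ``(resp.\ $U$)'' as a dual statement about adjoints; the paper's proof in fact uses both hypotheses at once. It fixes $A=L\circ F+R_1$ of type $(S_1S_2)(S_3)$ with $\ord(R_1)<\ord(S_3)$ \emph{and} $A=M\circ G+R_2$ of type $(S_1)(S_2S_3)$ with $\ord(R_2)<\ord(S_3)$, and then divides $L=M\circ N+R$ by that specific $M$. Subtracting the two expressions for $A$ gives $R\circ F+R_1-R_2=M\circ(G-N\circ F)$; since $\ord(R_1),\ord(R_2)<\ord(F)$, a nonzero $R$ would force the symbol of the left-hand side to be $\Sym(R)\,S_3$, which must then be divisible by $S_1$, and $\gcd(S_1,S_3)=1$ would give $S_1\mid\Sym(R)$, contradicting the conclusion of the division lemma. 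Hence $R=0$ and $A=M\circ N\circ F+R_1$, after which your reduction finishes the proof. To salvage your write-up, replace your arbitrary $F_1$ by this $M$ and run the symbol comparison above; the coprimality hypothesis then does exactly the work you hoped it would, but only because both remainders $R_1$ and $R_2$ are known to have order below $\ord(S_3)$.
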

\begin{proof}
Let $R_1 \in V$ be any common obstacle of type $(S_1 S_2)(S_3)$.
Then we have $\ord (R_1) < \ord (S_3)$. Let $L, F \in K[D]$ be such
that
\begin{equation} \label{eq_obst_thm_1}
A = L \circ F + R_1 \ ,
\end{equation}
where $\Sym(L) = S_1 S_2$, $\Sym(F)=S_3$. Similarly, let $R_2 \in U$,
$\ord (R_2) < \ord (S_3)$ be a fixed remainder with respect to an
incomplete factorizations of $A$
of type $(S_1)(S_2S_3)$. Let $M, G \in K[D]$ be such that
\begin{equation}
\label{eq_obst_thm_2}
A = M \circ G + R_2  \ ,
\end{equation}
where $\Sym(M) = S_1$, $\Sym(G)=S_2 S_3$.

By Division Lemma~\ref{lem:division}, there exist $N, R \in K[D]$ such
that
\begin{equation} \label{eq_obst_thm_3}
L = M \circ N + R \ ,
\end{equation}
where $\Sym(N) = S_2$ and $\Sym (R)$ is either zero or is not divisible by $\Sym (M)$.

We now claim that $R = 0$. Combining~(\ref{eq_obst_thm_1}), (\ref{eq_obst_thm_2}), (\ref{eq_obst_thm_3}),
we have
\[
(M \circ N + R) \circ F + R_1 = M \circ G + R_2 \ ,
\]
\[
R \circ F + R_1 -R_2 = M \circ (G - N \circ F) \ .
\]
Since the orders of $R_1, R_2$ are both less than the order of $F$, if $R$ were not
zero, the symbol on the left side of the last equation would be that of $R \circ F$, which
would imply that $\Sym(R)$ is divisible by $\Sym (M)$ because $\gcd(S_1, S_3) = 1$.
Hence $R = 0$, showing that $A = M \circ N \circ F + R_1$ is an incomplete factorization
of type $(S_1)(S_2)(S_3)$ with remainder $R_1$. We now show $R_1$ is a common obstacle
for that type. This follows easily since if $A = M_0 \circ N_0 \circ F_0 +R_0$
is any incomplete factorization of that type, then $A = (M_0 \circ N_0) \circ F_0+R_0$
is one of type $(S_1S_2)(S_3)$ and hence $\ord (R_0) \ge \ord (R_1)$.
This completes the proof that $V \subseteq W$, which is
thus non-empty also. If furthermore, $R_0 \in W$, then since we have shown that
$R_1 \in W$ for any $R_1 \in V$, it follows that $\ord (R_0) \le \ord (R_1)$
and hence $R_0$ is also of minimal order as a remainder of type $(S_1S_2)(S_3)$,
or in other words, $R_0 \in V$. This shows that $V = W$.
\end{proof}

\begin{ex} In Theorem~\ref{thm:main_obstacle}, take $A$ to be
$A_4$ from~(\ref{op:A4}), and take $S_1=X$, $S_2=X$, and $S_3=Y^2$.
As we showed in the examples before Theorem~\ref{thm:main_obstacle},
the orders of common obstacles of the types $(S_1 S_2)(S_3)$ and
$(S_1)(S_2 S_3)$ are $1$, which is less then the order of $S_3$. The
theorem implied that the sets of common obstacles to factorization
of the types $(S_1 S_2)(S_3)$ and $(S_1)(S_2)(S_3)$ are the same,
which accords with our computations in the examples
before Theorem~\ref{thm:main_obstacle}.
\end{ex}

\begin{ex}[Assumptions on the orders of common obstacles are necessary]
Consider operator $A_4$ from (\ref{op:A4}), where $\Sym (A) = X^2 Y^2$.
Let $S_1 = X^2$, $S_2 = S_3 = Y$. Then $\gcd(S_1, S_3) = 1$.
It was shown (example before Theorem~\ref{thm:main_obstacle})
that with respect to the type $(S_1)(S_2S_3) = (X^2)(Y^2)$, the
operator $R_2 = D_x + D_y + 1$ is the unique common obstacle. Using similar methods, it can
be shown that with respect to the type $(S_1S_2)(S_3) = (X^2Y)(Y)$, the operator
$R_1 = D_x +1$ is a common obstacle. Here the hypothesis of
Theorem~\ref{thm:main_obstacle} is not satisfied, because $\ord(R_1) = \ord(R_2) = \ord(S_3)
= 1$. It can also be shown that with respect to the type $(S_1)(S_2)(S_3) = (X^2)(Y)(Y)$, the
only common obstacle is $R_0 = D_x + D_y + 1$. Clearly, $R_1 \neq R_0$ cannot
be a common obstacle of type $(S_1)(S_2)(S_3)$. We note that $N = D_y$, $R = 1$ in
this example.
\end{ex}

\begin{cor} \label{thm:main} Let, in $K[D]$, an LPDO $A$ have two factorizations
into two factors:
\[
L \circ F = A = M \circ G \quad (\text{or} \   F \circ L = A = G
\circ M ) \ ,
\]
where $\gcd(\Sym(F),\Sym(M))=1$. Then there is a factorization of
$A$ into three factors:
\[
A=M \circ N \circ F \quad (\text{or} \   A = F \circ N \circ M )
\]
 for some $N \in K[D]$.
\end{cor}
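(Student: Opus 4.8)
The plan is to obtain Corollary~\ref{thm:main} as the special case of Theorem~\ref{thm:main_obstacle} in which both incomplete factorizations are exact, so that the remainders $R_1$ and $R_2$ are zero.

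First I would read off the symbol factorization. Put $S_1 = \Sym(M)$ and $S_3 = \Sym(F)$. Since $S_1 = \Sym(M)$ divides $\Sym(A) = \Sym(L)\cdot S_3$ and $\gcd(S_1, S_3) = 1$, it divides $\Sym(L)$; set $S_2 = \Sym(L)/S_1$, so that $\Sym(A) = S_1 S_2 S_3$, $\Sym(L) = S_1 S_2$ and $\Sym(G) = S_2 S_3$. I would dispose of the degenerate cases $\ord(F) = 0$ or $\ord(M) = 0$ at once: then the factor in question is a unit of $K[D]$ (a nonzero element of the field $K$), and the desired $N$ is produced by a one-line rearrangement, so from then on $\ord(F), \ord(M) \ge 1$.

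Next I would note that $A = L \circ F$ is an incomplete factorization of type $(S_1S_2)(S_3)$ with remainder $0$, and $A = M \circ G$ one of type $(S_1)(S_2S_3)$ with remainder $0$; hence $0 \in V$ and $0 \in U$, and since $\ord(0) = -\infty < \ord(S_3)$ the hypotheses of Theorem~\ref{thm:main_obstacle} hold with $R_1 = R_2 = 0$. Tracing the proof of that theorem with these choices: applying the Division Lemma~\ref{lem:division} to $L$ and $M$ (valid because $S_1 \mid S_1 S_2$) yields $N, R \in K[D]$ with $L = M \circ N + R$, $\Sym(N) = S_2$, and $\Sym(R)$ not divisible by $S_1$ unless $R = 0$; substituting into $L \circ F = A = M \circ G$ gives $R \circ F = M \circ (G - N \circ F)$, and if $R \ne 0$ the symbol of the left side is $\Sym(R)\cdot S_3$, which would have to be divisible by $S_1 = \Sym(M)$ --- impossible when $S_1 \nmid \Sym(R)$ and $\gcd(S_1,S_3) = 1$. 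So $R = 0$ and $A = M \circ N \circ F$ with exactly the given $M$ and $F$.

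For the parenthetical variant $F \circ L = A = G \circ M$, I would pass to the formal adjoint, which reverses the order of composition and changes each monomial of a principal symbol only by a sign, hence leaves coprimality of principal symbols intact: from $L^* \circ F^* = A^* = M^* \circ G^*$ with $\gcd(\Sym(F^*),\Sym(M^*)) = 1$ the case already proved gives $A^* = M^* \circ N_0 \circ F^*$ for some $N_0 \in K[D]$, and adjoining back produces $A = F \circ N_0^* \circ M$. Granting Theorem~\ref{thm:main_obstacle}, the proof has essentially no hard step; the only thing demanding care is the bookkeeping --- choosing $S_1, S_2, S_3$ so that the specific operators $M$ and $F$ from the hypothesis, and not merely operators sharing their symbols, are the ones appearing in the conclusion (which is automatic once the remainders are taken to be $0$), and keeping the left/right roles straight when invoking the adjoint for the mirrored statement.
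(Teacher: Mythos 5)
Your proposal is correct and follows essentially the same route as the paper, which simply invokes Theorem~\ref{thm:main_obstacle} for the first statement and formal adjoints for the bracketed one; you have merely filled in the details the paper leaves implicit (taking $R_1=R_2=0$, checking the order hypothesis via $\ord(0)=-\infty<\ord(S_3)$, and tracing the theorem's proof to see that the specific $M$ and $F$ survive into the three-factor factorization). The extra care about the degenerate order-zero cases and about why the adjoint preserves coprimality of symbols is welcome but does not change the approach.
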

\begin{proof} The first statement is implied from that of
Theorem~\ref{thm:main_obstacle}. For the second (the one which is in
the brackets) we apply properties of the formal adjoints of LPDOs.
\end{proof}

\begin{ex}[Fourth Order LPDO] Let
\begin{eqnarray*}
 L &=& D_x^3 + (1+x) D_x^2 D_y + x D_x D_y^2 - x^2 D_x^2 - x^3D_x D_y  \\
&& \quad +(1-4 x) D_x + (x-2 x^2) D_y - 2\ ,
\end{eqnarray*}
and
$F= D_y + x^2$,
$M = D_x + xD_y$, and
\[ G= D_x^2 D_y + D_x D_y^2 + x^2 D_{xx} + (4x-x^4) D_x + D_y -4
x^3+x^2+2\ .\]
Then $L \circ F = M \circ G$, meaning that we have two different
factorizations into two factors for the LPDO $A=L \circ F$.
Moreover, one can find an
LPDO $N$ such that $L=M \circ N$. Explicitly, $N=D_x^2 + D_xD_y - x^2 D_x -2x+1$.
Then $A=M \circ N \circ F$, meaning that $A$ has a factorization into three
factors.
\end{ex}

\begin{ex}[Multidimensional LPDO] We have $L \circ F = M \circ
G$ for $L= D_x D_y + s D_x + t s + s_x$, $F= D_z + b$,
$M = D_x + t$, $G= D_x D_z + b D_y + s D_z + s b + b_y$.
It is also easy to see that $L= M \circ N$, where $N=D_y + s$.
\end{ex}

\begin{ex}[Condition $\gcd(\Sym(F),\Sym(M))=1$ is necessary for
Theorem~\ref{thm:main}]
Consider
$L = D_x D_y + \frac{1}{1-x} D_x + x D_y + \frac{2-x}{(x-1)^2}$,
$F = D_x + \frac{x}{x-1}$, $M = D_x + 1$,
$G = D_x D_y + \frac{1}{1-x} D_x + \frac{x^2-x+1}{x-1} D_y -
\frac{x}{(x-1)^2}$,
for which $L \circ F = M \circ G$. Here $\Sym(L)$ is divisible by
$\Sym(M)$, but condition $\gcd(\Sym(F),\Sym(M))=1$ fails. On the
other hand, the Laplace invariants for
LPDO $L$ are $h=-1$, $k=-\frac{-2x+2+x^2}{(x-1)^2} \neq 0$,
and, therefore, $L$ has no factorization.
\end{ex}

\section{Conclusions}
The main result of the paper formulated
in Theorem~\ref{thm:main_obstacle} provides a simplification of the overall picture of
factorization of LPDOs.

\section{Appendix}

Below is an example of how the direct approach and
the approach based on~Theorem~\ref{thm:main_obstacle}
are different when it comes to computations.

Let us search for factorizations of the type $(X)(XY)(Y)$
for a bivariate fourth-order LPDO, $A=D_x^2D_y^2 + \sum_{i+j=0}^3 a_{ij} D_x^i D_y^j$, $a_{ij} \in
K$. The direct approach considers $A=M \circ N \circ F$ for some $M=D_x + m$, $N=D_x D_y + n_{10} D_x
+ n_{01} D_y + n_{00}$, $F= D_y + f$, where $m, n_{10}, n_{01},
n_{00}, f \in K$. Equating the corresponding coefficients, we have $a_{30}=a_{03}=0$,
$n_{10} = a_{21}-f$,
$n_{01} = a_{12}-m$, $n_{00} = m f-ma_{21}-f a_{12}-f_x-(a_{21})_x+a_{11}$,
and
\begin{equation} \label{eq:last_sec:eq_for_f_and_m_separately}
\left.
  \begin{array}{lll}
0&=& 2 f_{xy}-f^2 a_{12}-4 f_x f+f a_{11}+2 f_x a_{21}+f_y a_{12}-a_{10} \ , \\
0&=& f_y-f^2+f a_{21}-a_{20} \ , \\
0&=& m a_{11}-m^2 a_{21}-2 m a_{21x}-m_x a_{21}-a_{21xx}+a_{11x}-a_{01} \ , \\
0&=& m a_{12}-m^2-m_x+a_{12x}-a_{02} \ .
  \end{array}
\right\}
\end{equation}

\begin{eqnarray}
0&=& f_{xxy}-a_{00}+m f_y a_{12}-m^2 f_y-2 f_x^2-2 f_{xx} f+f_{xx}
a_{21}+ \label{eq:last_sec:eq_for_fm} \\
 && +f_y a_{12x}+ f_{xy} a_{12} -m^2 f a_{21}+m^2 f^2+f^2 m_x-f^2 a_{12x}- \nonumber \\
 && -f a_{21xx} +f a_{11x}+f_x a_{11}- m f^2 a_{12} -2 m f a_{21x}+m f a_{11}- \nonumber \\
&& -f m_x a_{21} -2 f f_x a_{12} -f_y m_x
 \ ,  \nonumber
\end{eqnarray}
An approach based on Theorem~\ref{thm:main} considers
$A=L \circ F = M \circ G$
for some $L=D_x^2 D_y + \sum_{i+j=0}^2 l_{ij} D_x^i D_y^j$, $F= D_y
+ f$, $M=D_x + m$, $G=D_x D_y^2 + \sum_{i+j=0}^2 g_{ij} D_x^i
D_y^j$, where $l_{ij}, f, m, g_{ij} \in K$.
$A=L \circ F$ implies $a_{30}=0$ and
$l_{20} = a_{21}-f$, $l_{02} = a_{03}$, $l_{11} = a_{12}$,
$l_{10} =a_{11} - a_{12} f - 2 f_x$, $l_{01} = a_{02}-a_{03}f$,
$l_{00} = a_{03} f^2- f a_{02}-a_{12} f_x-2 a-{03} f_y- f_{xx}+a_{01}$,
while $A=M \circ G$ implies $a_{03}=0$, and
$g_{20} = 0$, $g_{11} = a_{21}$, $g_{02} = a_{12}-m$, $g_{10} =
a_{20}$, $g_{01} = a_{11} - m a_{21} - a_{21x}$, $g_{00}
= a_{10}-m a_{20} - a_{20x}$. The remaining conditions are
\[
\text{conditions} \ (\ref{eq:last_sec:eq_for_f_and_m_separately}) \
,
\]
and two new conditions:
\begin{eqnarray}
0&=& a_{00}-m a_{10}+m^2 a_{20}+2 m a_{20x}-a_{10x}+m_x a_{20}+a_{20xx} \ , \label{eq:last_sec:eq_m}\\
0&=& f_{xxy}-f^2 a_{02}-2 f_x a_{12} f-2 f_{xx} f+f a_{01}-2
f_x^2+f_x a_{11}+ \label{eq:last_sec:eq_f} \\
&& + f_y a_{02} +f_{xx} a_{21}+a_{12} f_{xy}-a_{00} \ . \nonumber
\end{eqnarray}

Thus, when algebraic manipulations only are used the difference
between the two approaches applied to the given problem is as follows.
Instead of the non-linear
Partial Differential Equation (PDE) in two unknown variables $f$ and
$m$, (\ref{eq:last_sec:eq_for_fm}) that we have in the first
(direct) approach, the second approach implies a non-linear PDE in
variable $f$, (\ref{eq:last_sec:eq_f}) and another one in variable
$m$, (\ref{eq:last_sec:eq_m}). In other words, the second approach
gives separation of variables.

\subsubsection*{Acknowledgments.}
The author was supported by the Austrian Science Fund (FWF) under
project DIFFOP, Nr. P20336-N18.

\end{document}